\theoremstyle{plain}
\newtheorem{thm}{\protect\theoremname}
  \theoremstyle{definition}
  \newtheorem{defn}[thm]{\protect\definitionname}
  \theoremstyle{plain}
  \newtheorem{prop}[thm]{\protect\propositionname}
  \theoremstyle{remark}
  \newtheorem{rem}[thm]{\protect\remarkname}
  \theoremstyle{plain}
  \newtheorem{cor}[thm]{\protect\corollaryname}
\def\LyX{\texorpdfstring{%
  L\kern-.1667em\lower.25em\hbox{Y}\kern-.125emX\@}
  {LyX}}
\date{}
  \providecommand{\corollaryname}{Corollary}
  \providecommand{\definitionname}{Definition}
  \providecommand{\propositionname}{Proposition}
  \providecommand{\remarkname}{Remark}
\providecommand{\theoremname}{Theorem}
\begin{document}

\title{Holomorphic curves at one point}

\author{Erkao Bao}
\maketitle
\begin{abstract}
Let $(M,\omega)$ be a closed symplectic manifold with a compatible
almost complex structure $J.$ We prove that for $p\in M$ and $E>0$,
if $v:\Sigma\to M$ is a non-constant $J$-holomorphic curve with
symplectic area smaller than $E,$ then the number of the elements
in $v^{-1}(p)$ is bounded, and the bound is independent of $v$.
We also provide a uniform Hofer's energy bound for $J$-holomorphic
curves in $M\backslash p$ based on the symplectic area. Using these
two results we compactify the moduli space of $J$-holomorphic curves
in $M$ by adding holomorphic buildings at the point $p.$\\
\\

\end{abstract}
Let $(M,\omega)$ be a closed smooth symplectic manifold of dimension
$2N$ and $J$ be a compatible almost complex structure on $M$. For
a fixed a point $p\in M,$ we assume that $J$ is integrable inside
a small neighborhood $U$ of $p.$ For a sufficiently small neighborhood
$V\subseteq U$ of $p$ there exists a coordinate chart $\varphi:V\to B(0,\epsilon)\subseteq\mathbb{C}^{N}$
such that $\varphi(p)=0,$ $\varphi^{*}i=J$, and $\varphi^{*}\omega_{0}=\omega,$
where $B(0,\epsilon):=\left\{ \left.z\in\mathbb{C}^{N}\right||z|<\epsilon\right\} $
and $\omega_{0}:=\frac{i}{2}\sum_{k=1}^{n}dz_{k}\wedge d\bar{z}_{k}=\sum_{k=1}^{n}dx_{k}\wedge dy_{k}$
is the standard symplectic structure on $\mathbb{C}^{N}.$ We identify
$B(0,\epsilon)\backslash0$ with $W:=(-\infty,\log\epsilon)\times S^{2N-1}$
via the map $\psi(z)=(\log|z|,\frac{z}{|z|}).$ 

Let $\lambda$ be the standard contact $1$-form on the unit sphere
$S^{2N-1}\subseteq\mathbb{C}^{N},$ namely 
\[
\lambda=\left.\sum_{k=1}^{N}x_{k}dy_{k}-y_{k}dx_{k}\right|_{S^{2n-1}}.
\]
 The corresponding Reeb vector field on $S^{2N-1}$ is denoted by
$\mathbf{R},$ i.e. $\mathbf{R}\in\ker d\lambda$ and $\lambda(\mathbf{R})=1.$
We extend $\lambda$ trivially to $W$ via the pull back of the projection
map $\Theta:(-\infty,\log\epsilon)\times S^{2N-1}\to S^{2N-1}.$

Let $(\Sigma,j)$ be a Riemann surface with finitely many punctures
and $v:\mbox{\ensuremath{\Sigma}}\to M\backslash p$ be a $J$-holomorphic
curve, i.e. $J(u)\circ Tu=Tu\circ j$.
\begin{defn}
The Hofer's energy $E(v)$ of $v$ is defined to be 
\[
E(v)=E_{symp}(v)+E_{d\lambda}(v)+E_{\lambda}(v),
\]
\[
E_{symp}(v)=\int_{\Sigma}v^{*}\omega,
\]
\[
E_{d\lambda}(v)=\int_{v^{-1}(W)}v^{*}d\lambda,
\]
 and 
\[
E_{\lambda}(v)=\underset{\phi\in\mathcal{C}}{\sup}\int_{v^{-1}(W)}v^{*}(\phi(r)dr\wedge\lambda)
\]
 where 
\[
\mathcal{C}=\left\{ \phi\in C_{c}^{\infty}((-\infty,\log\epsilon),[0,1])\left|\int_{-\infty}^{\log\epsilon}\phi(x)dx=1\right.\right\} .
\]

\end{defn}
It is easy to check that $v^{*}\omega,$ $v^{*}d\lambda,$ and $v^{*}(\phi(r)dr\wedge\lambda)$
are non-negative multiples of a volume form on $\Sigma.$ 

Let $(r,\Theta)$ be the coordinate of $W:=(-\infty,\log\epsilon)\times S^{2N-1},$
$q$ be a puncture of $\Sigma,$ and $f$ be a biholomorphic map from
$(-\infty,0)\times\mathbb{R}/\mathbb{Z}$ to a small open subset of
$\Sigma$ around $q.$ If we choose $(s,t)$ as the coordinate of
$(-\infty,0)\times\mathbb{R}/\mathbb{Z},$ then the map $v\circ f$
can be written as $(r(s,t),\Theta(s,t)).$ We say the map $v$ converges
to a Reeb orbit of period $T>0$ around the puncture $q,$ if there
exists a map $\gamma:\mathbb{R}/\mathbb{Z}\to S^{2N-1}$ such that
\[
\frac{d}{dt}\left(\gamma(Tt)\right)=\mathbf{R}\left(\gamma(Tt)\right),
\]
\[
\begin{array}{ccccc}
\underset{s\to-\infty}{\lim}\Theta(s,t)=\gamma(Tt), &  & \mbox{and} &  & \underset{s\to-\infty}{\lim}\frac{r(s,t)}{s}=T.\end{array}
\]
 The definition of converging to a Reeb orbit is independent of the
choice of $f.$ It is easy to see that in this case $T=2k\pi$ for
some $k\in\mathbb{Z}_{>0}.$ Every Reeb orbit of $\mathbf{R}$ satisfies
the Morse-Bott condition, so the result in \cite{morse bott} applied
to this special case gives us
\begin{thm}
\label{thm:converging to Reeb} \cite{morse bott} Assume the $J$-holomorphic
curve $v$ has finite Hofer's energy, i.e. $E(v)<+\infty.$ Around
each puncture of $\Sigma,$ v converges to either a point in $M\backslash p$
or a Reeb orbit. 
\end{thm}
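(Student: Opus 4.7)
The plan is to dichotomize the behavior of $v$ near each puncture. Either the image stays bounded away from $p$, in which case Gromov's removable singularity theorem gives convergence to a point in $M\backslash p$, or the image accumulates at $p$, in which case one identifies the end of $v$ with a finite Hofer-energy curve in the symplectization of $(S^{2N-1},\lambda)$ and applies the Morse--Bott convergence theorem of \cite{morse bott}.

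Fix a puncture $q\in\Sigma$ and a biholomorphism $f:(-\infty,0)\times\mathbb{R}/\mathbb{Z}\to\Sigma$ onto a punctured neighborhood of $q$. Suppose first that there exist a neighborhood $V'$ of $p$ with $\overline{V'}\subseteq V$ and some $s_{0}<0$ such that $v\circ f((-\infty,s_{0}]\times S^{1})\subseteq M\backslash V'$. Then the end of $v$ takes values in the compact set $M\backslash V'\subseteq M\backslash p$ and has finite symplectic area $E_{symp}(v)<+\infty$, so Gromov's removable singularity theorem extends $v$ continuously across $q$ to a point in $M\backslash p$. Otherwise the image of $v\circ f$ accumulates at $p$.

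In this second case, I first rule out oscillatory behavior: if the image exited and re-entered a fixed small ball around $p$ infinitely often as $s\to-\infty$, each annular crossing between two concentric small spheres would contribute at least a uniform positive amount of symplectic area by the monotonicity lemma for $J$-holomorphic curves, contradicting $E_{symp}(v)<+\infty$. Hence the end image is eventually contained in $V\backslash p$, i.e.\ in $W$ after composition with $\psi$. Next I use the following identification: on $\mathbb{C}^{N}\backslash 0$ the radial Euler vector field $\sum x_{k}\partial_{x_{k}}+y_{k}\partial_{y_{k}}$ is sent by $i$ to $\sum x_{k}\partial_{y_{k}}-y_{k}\partial_{x_{k}}$, which restricts to $\mathbf{R}$ on $S^{2N-1}$; therefore $\psi$ conjugates $i$ with the translation-invariant almost complex structure on $W$ characterized by $J\partial_{r}=\mathbf{R}$ and $J|_{\ker\lambda}=i|_{\ker\lambda}$. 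Thus $v$ restricted to the end is a $J$-holomorphic cylinder in a piece of the symplectization of the Morse--Bott contact manifold $(S^{2N-1},\lambda)$, and its Hofer energy in the sense of \cite{morse bott} is bounded by $E_{d\lambda}(v)+E_{\lambda}(v)<+\infty$. Applying the cited Morse--Bott convergence theorem then yields convergence to a Reeb orbit of $\mathbf{R}$, whose period is necessarily of the form $2k\pi$ as noted.

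The main obstacle is ruling out oscillations in the second case; once that is in hand, the identification of $(\mathbb{C}^{N}\backslash 0,i,\omega_{0})$ near the origin with a symplectization of $(S^{2N-1},\lambda)$ is elementary and the remainder is an immediate application of \cite{morse bott}.
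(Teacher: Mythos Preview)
The paper does not supply its own proof of this theorem; it is stated as a direct citation of the Morse--Bott asymptotic convergence result in \cite{morse bott}, noting only that the Reeb orbits of $(S^{2N-1},\lambda)$ are Morse--Bott. Your sketch is correct and spells out the reduction in more detail than the paper does---separating off the removable case via Gromov, excluding oscillation near $p$ by monotonicity, and identifying the remaining end with a finite-energy half-cylinder in the symplectization of $(S^{2N-1},\lambda)$---before invoking the same cited result.
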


We say a puncture $q$ is removable if around $q,$ $v$ converges
to a point in $M\backslash p.$ Otherwise, we say $q$ is non-removable.
The next proposition says that the number of non-removable punctures
of $v$ is bounded by a finite constant independent of $v.$

\begin{prop}
\label{prop:number of negative ends}Given $E>0,$ there exists a
number $N\in\mathbb{N}$ such that for any finitely punctured Riemann
surface $(\Sigma,j),$ and any non-constant $J$-holomorphic map $v:\Sigma\to M\backslash p$
with $E(v)\leqq E,$ the number of non-removable punctures of $v$
is no greater than $N.$\end{prop}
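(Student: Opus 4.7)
Since $\mathbf{R}$ is the Hopf vector field on $(S^{2N-1},\lambda)$, every closed Reeb orbit has period $2k\pi$ for some $k\in\mathbb{Z}_{>0}$, and hence at least $2\pi$. By Theorem~\ref{thm:converging to Reeb}, each non-removable puncture $q_i$ of $v$ is asymptotic to a Reeb orbit of some period $T_i\geq 2\pi$. The plan is to prove the lower bound $E_\lambda(v)\geq 2\pi n$, where $n$ is the number of non-removable punctures; since $E_\lambda(v)\leq E(v)\leq E$, this immediately gives $n\leq E/(2\pi)$, and we may take $N=\lfloor E/(2\pi)\rfloor$.

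Choose pairwise disjoint cylindrical coordinate neighborhoods $U_i\cong(-\infty,0)\times\mathbb{R}/\mathbb{Z}$ of the $q_i$, write $v|_{U_i}=(r_i(s,t),\Theta_i(s,t))$, and set
\[
F_i(r)\;:=\;\int_{U_i\cap\{r_i=r\}} v^*\lambda.
\]
By Theorem~\ref{thm:converging to Reeb}, $r_i(s,t)\to-\infty$ uniformly in $t$ as $s\to-\infty$, and $\Theta_i(s,\cdot)$ converges to a parametrization of the Reeb orbit. Hence for $r$ sufficiently negative and regular (via Sard), the slice $U_i\cap\{r_i=r\}$ is a single embedded loop winding once around $q_i$, and $F_i(r)\to T_i$ as $r\to-\infty$. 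Since $v^*d\lambda\geq 0$, Stokes' theorem applied to the annular region between two such level loops shows $F_i$ is monotone non-decreasing in $r$; combined with the limit, this yields $F_i(r)\geq T_i$ for all $r$ sufficiently negative.

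On the complement $\Sigma\setminus\bigcup_i U_i$, the map $v$ extends continuously into $M\setminus p$ (removable punctures converge to points of $M\setminus p$ by definition, and the rest is already mapped into $M\setminus p$ by hypothesis), so its image avoids a neighborhood of $p$ in the chart $\varphi$. Therefore there exists $R<\log\epsilon$ with $v^{-1}((-\infty,R)\times S^{2N-1})\subseteq\bigcup_i U_i$ and $F_i(r)\geq T_i$ for every $r\leq R$ and every $i$. For any $\phi\in\mathcal{C}$ supported in $(-\infty,R)$, the coarea identity on each $U_i$ yields
\[
\int_\Sigma v^*(\phi(r)dr\wedge\lambda)\;=\;\sum_{i=1}^n\int_{-\infty}^R \phi(r)\,F_i(r)\,dr\;\geq\;\sum_{i=1}^n T_i\;\geq\;2\pi n,
\]
so $E_\lambda(v)\geq 2\pi n$, as desired.

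The main technical obstacle is verifying the geometric claims supporting $F_i(r)\geq T_i$: that the slice $U_i\cap\{r_i=r\}$ is really a single loop in the puncture's homotopy class, that $F_i(r)\to T_i$ with the correct orientation, and that the annular Stokes' argument makes sense (including the claim that the tail of $v$ off the $U_i$ stays uniformly away from $p$). All of these rest on a sufficiently quantitative version of the Morse--Bott asymptotic convergence statement from \cite{morse bott}, which is the technical heart of the proof.
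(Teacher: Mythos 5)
Your argument is essentially correct, but it takes a genuinely different route from the paper's. The paper argues by contradiction: given a sequence $v_{n}$ with $E(v_{n})\leqq E$ and an unbounded number of non-removable punctures, it intersects each $v_{n}$ with a fixed-width shell $[r_{n}-1,r_{n}+1]\times S^{2N-1}$, observes that the preimage has at least $\sum k_{q_{n}}$ connected components crossing the shell, and applies Gromov's Monotonicity Theorem to the non-degenerate form $\Omega_{n}=\phi_{n}(r)dr\wedge\lambda+d\lambda$ to extract a uniform quantum $\delta_{0}>0$ of energy per component, contradicting the bound $E$. You instead prove the direct quantitative estimate $E_{\lambda}(v)\geqq\sum_{i}T_{i}\geqq2\pi n$ by pushing $\phi$ far down the cylindrical end, using the coarea identity and the Stokes monotonicity of $F_{i}(r)=\int_{\{r\circ v=r\}}v^{*}\lambda$ together with $F_{i}\to T_{i}$, and then invoking the fact that the minimal Reeb period on $(S^{2N-1},\lambda)$ is $2\pi$. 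Your approach buys an explicit constant $N=\lfloor E/2\pi\rfloor$ (and, like the paper's, automatically counts punctures with multiplicity), at the cost of leaning on the asymptotic convergence more quantitatively; the paper's monotonicity argument is softer and would survive in settings where one only knows that periods are bounded below without computing them. Two small cautions: the claim that the level set $U_{i}\cap\{r_{i}=r\}$ is a single embedded loop is neither needed nor obviously true for every regular $r$ --- all you need is $F_{i}(r)\geqq T_{i}$, which your Stokes argument gives regardless of connectivity --- and the statement that $\Sigma\setminus\bigcup_{i}U_{i}$ maps away from a neighborhood of $p$ uses that $\Sigma$ is obtained from a \emph{closed} surface by removing finitely many points, which is the setting intended by the paper but worth saying explicitly.
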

\begin{proof}
Suppose to the contrary. Let $v_{n}$ be a $J$-holomorphic curves
from a finitely punctured Riemann surface $\Sigma_{n}$ to $M\backslash p$
with $E(v_{n})\leqq E,$ such that the number of non-removable punctures
of $v_{n}$ goes to infinity as $n\to\infty.$ 

Let $\eta^{q_{n}}$ be the Reeb orbit of $\mathbf{R}$ to which around
the non-removable puncture $q_{n}$ , $v_{n}$ converges. We denote
the period of $\eta^{q_{n}}$ by $2k_{q_{n}}\pi,$ with $k_{q_{n}}\in\mathbb{Z}_{>0}.$
We can pick $[r_{n}-1,r_{n}+1]\times S^{2N-1}$ inside $W=(-\infty,\log\epsilon)\times S^{2N-1},$
such that $v_{n}^{-1}\left\{ [r_{n}-1,r_{n}+1]\times S^{2N-1}\right\} $
consists of connected components $A_{l,n}\subset\Sigma_{n}$ for $l\in I_{n}$
with 
\[
\partial A_{l,n}=\partial_{1}A_{l,n}-\partial_{2}A_{l,n},
\]
\[
v_{n}(\partial_{1}A_{l,n})\subset\{r_{n}+1\}\times S^{2N-1},
\]
\[
v_{n}(\partial_{2}A_{l,n})\subset\{r_{n}-1\}\times S^{2N-1},
\]
 and the cardinality $|I_{n}|$ of the index set $I_{n}$ satisfies
$|I_{n}|=\sum k_{q_{n}},$ where the summation is taken over all the
punctures $q_{n}$ for fixed $n.$ Thus, $|I_{n}|$ goes to $\infty,$
as $n\to\infty.$

Pick $\phi_{n}(r)\in\mathcal{C}$ to be a function satisfying $\phi(r)=\frac{1}{3}$
for $r_{n}-1\leqq r\leqq r_{n}+1.$ Look at the 2-form $\Omega_{n}:=\phi_{n}(r)dr\wedge\lambda+d\lambda$
on $W$, we know it is non-degenerate over $[r_{n}-1,r_{n}+1]\times S^{2N-1}.$
Since $S^{2N-1}$ is compact, by the Gromov's Monotonicity Theorem
we have $\int_{A_{l,n}}v_{n}^{*}\Omega_{n}>\delta_{0}>0,$ for some
$\delta_{0}$ independent of $n$. Therefore, we get 
\begin{eqnarray*}
E & \geqq & E(v_{n})\\
 & \geqq & E_{d\lambda}(v_{n})+E_{\lambda}(v_{n})\\
 & = & \int_{v_{n}^{-1}(W)}v_{n}^{*}d\lambda+\underset{\phi\in\mathcal{C}}{\sup}\int_{v_{n}^{-1}(W)}v_{n}^{*}(\phi(r)dr\wedge\lambda)\\
 & \geqq & \int_{v_{n}^{-1}(W)}v_{n}^{*}d\lambda+\int_{v_{n}^{-1}(W)}v_{n}^{*}(\phi_{n}(r)dr\wedge\lambda)\\
 & \geqq & \int_{v_{n}^{-1}([r_{n}-1,r_{n}+1]\times S^{2N-1})}v_{n}^{*}\Omega_{n}\to\infty.\\
 & = & \sum_{l\in I_{n}}\int_{A_{l,n}}v_{n}^{*}\Omega_{n}\\
 & \geqq & \sum_{l\in I_{n}}\delta_{0}\\
 & = & \delta_{0}|I_{n}|\to+\infty.
\end{eqnarray*}
\end{proof}
\begin{rem}
For each non-removable punctures of $v,$ we can associate a multiplicity
which is the multiplicity of the Reeb orbit. The above proof actually
shows that the number of non-removable punctures of $v$ counted with
multiplicity is bounded by $N.$
\end{rem}
For a non-constant $J$-holomorphic curve $v$ from a Riemann surface
$(S,j)$ to $M,$ we know $v^{-1}(p)$ is discrete, and hence finite.
Let $(\Sigma,j)$ be the punctured Riemann surface defined by $(S\backslash v^{-1}(p),j).$
Now $v$ can be viewed as a $J$-holomorphic curve from $\Sigma$
to $M\backslash p.$ From the local behavior of a holomorphic map,
we know that $v$ converges to a Reeb orbit along each puncture of
$\Sigma.$ From this we can see by the Stokes' Theorem that $E(v|_{\Sigma})<\infty.$
We define the Hofer's energy $E(v)$ of $v$ to be $E(v|_{\Sigma}).$
The following proposition says the number of elements in $v^{-1}(p)$,
denoted by $\left|v^{-1}(p)\right|$ is bounded by a finite number
depends on the Hofer's energy $E(v),$ and independent of $v.$ 
\begin{prop}
\label{prop:number of 0 by Hofer} Given $E>0$, there exists $N$
such that for any Riemann surface $(S,j)$, and any non-constant $J$-holomorphic
curve $v:S\to M$ with $E(v)\leqq E,$ we have $\left|v^{-1}(p)\right|\leqq N.$ \end{prop}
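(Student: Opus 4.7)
The plan is to reduce the statement directly to Proposition \ref{prop:number of negative ends}. Given a non-constant $J$-holomorphic curve $v:S\to M$ with $E(v)\leq E$, I would first form the finitely punctured Riemann surface $\Sigma := S\setminus v^{-1}(p)$ and restrict to obtain $v|_\Sigma:\Sigma \to M\setminus p$. By the very definition of $E(v)$ given immediately before the proposition, $E(v|_\Sigma) = E(v) \leq E$, so the restricted map already has Hofer energy bounded by the same constant.

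The key observation is that each $q\in v^{-1}(p)$ corresponds to a \emph{non-removable} puncture of $\Sigma$. Indeed, the original map $v$ extends continuously across $q$ with $v(q)=p$, so in the cylindrical coordinates $(r,\Theta)$ on $W=(-\infty,\log\epsilon)\times S^{2N-1}$ the $r$-component of $v\circ f$ tends to $-\infty$ as one approaches the puncture. In particular $v|_\Sigma$ cannot converge to any point of $M\setminus p$ at $q$, and by Theorem \ref{thm:converging to Reeb} applied to $v|_\Sigma$ the puncture must be non-removable (with $v|_\Sigma$ converging there to a genuine Reeb orbit on $S^{2N-1}$). Consequently $|v^{-1}(p)|$ equals the number of non-removable punctures of $v|_\Sigma$.

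Now Proposition \ref{prop:number of negative ends} applies to $v|_\Sigma$ and produces a constant $N=N(E)$, depending only on the energy bound $E$, such that the number of non-removable punctures of $v|_\Sigma$ is at most $N$. Combining the two preceding steps yields $|v^{-1}(p)|\leq N$. Since both ingredients---Theorem \ref{thm:converging to Reeb} and Proposition \ref{prop:number of negative ends}---are already in hand, there is no substantive obstacle to this argument; the only content is the bookkeeping observation that each preimage of $p$ on the closed domain unfolds, after puncturing, into exactly the kind of non-removable puncture to which the previous proposition applies.
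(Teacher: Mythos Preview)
Your proposal is correct and is precisely the argument the paper has in mind: the paper's proof is the single line ``It follows directly from Proposition~\ref{prop:number of negative ends},'' and the discussion immediately preceding the proposition already records that each $q\in v^{-1}(p)$ becomes a puncture at which $v|_\Sigma$ converges to a Reeb orbit (hence is non-removable) and that $E(v):=E(v|_\Sigma)$. You have simply spelled out these bookkeeping steps explicitly.
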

\begin{proof}
It follows directly from the Proposition \ref{prop:number of negative ends}.\end{proof}
\begin{rem}
The Proposition \ref{prop:number of 0 by Hofer} is also true if we
count multiplicity.
\end{rem}

The following theorem gives a Hofer's energy bound by the Symplectic
area (compare to 9.2 in \cite{compactness}). 
\begin{thm}
\label{thm: hofer's energy bd by symplectic area} There exists $C_{1}>0,$
such that for any Riemann surface $(S,j)$ and any non-constant $J$-holomorphic
curve $v:S\to M$ satisfying $E_{symp}(v)<+\infty,$ we have $E(v)\leqq C_{1}E_{symp}(v).$ \end{thm}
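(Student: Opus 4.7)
The plan is to exploit the exactness of $\omega$ on $V\setminus\{p\}$. A direct calculation in the cylindrical coordinates on $W$ shows that the standard primitive $\alpha=\tfrac{1}{2}\sum_{k}(x_{k}\,dy_{k}-y_{k}\,dx_{k})$ of $\omega_{0}$ takes the form $\alpha=\tfrac{1}{2}e^{2r}\lambda$, so on the outer sphere $\partial V=\{r=\log\epsilon\}$ it evaluates to $\tfrac{\epsilon^{2}}{2}\lambda$. First I would apply Stokes' theorem on $v^{-1}(V)$ with small disks $D_{\delta}(q_{j})$ removed around each $q_{j}\in v^{-1}(p)$; the factor $e^{2r}\to 0$ in $\alpha$ forces the inner boundary integrals to vanish as $\delta\to 0$, so in the limit
\[
\int_{v^{-1}(\partial V)} v^{*}\lambda \;=\; \tfrac{2}{\epsilon^{2}}\,E_{symp}(v|_{v^{-1}(V)}) \;\leq\; \tfrac{2}{\epsilon^{2}}\,E_{symp}(v).
\]

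Next, to bound $E_{d\lambda}(v)$ I apply Stokes once more, now to $v^{*}d\lambda$ on the same punctured region. Around each $q_{j}$ the map $v$ converges to a Reeb orbit of some period $2\pi k_{j}$, so $\int_{\partial D_{\delta}(q_{j})} v^{*}\lambda\to 2\pi k_{j}$, and one obtains
\[
E_{d\lambda}(v) \;=\; \int_{v^{-1}(\partial V)} v^{*}\lambda \;-\; \sum_{j} 2\pi k_{j} \;\leq\; \tfrac{2}{\epsilon^{2}}\,E_{symp}(v).
\]

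For $E_{\lambda}(v)$, I fix $\phi\in\mathcal{C}$, set $\Phi(r):=\int_{-\infty}^{r}\phi(x)\,dx$, and use the identity $\phi(r)\,dr\wedge\lambda = d(\Phi\lambda)-\Phi\,d\lambda$. Since $\Phi\equiv 0$ for $r$ sufficiently negative (by the compact support of $\phi$) and $\Phi\equiv 1$ near $r=\log\epsilon$, Stokes' theorem on the same region gives
\[
\int_{v^{-1}(V\setminus p)} v^{*}(\phi\,dr\wedge\lambda) \;=\; \int_{v^{-1}(\partial V)} v^{*}\lambda \;-\; \int_{v^{-1}(V\setminus p)} \Phi(r\circ v)\,v^{*}d\lambda.
\]
Since $\Phi\geq 0$ and $v^{*}d\lambda\geq 0$, the right-hand side is bounded above by $\int_{v^{-1}(\partial V)} v^{*}\lambda$ uniformly in $\phi$; taking the supremum gives $E_{\lambda}(v)\leq\tfrac{2}{\epsilon^{2}}E_{symp}(v)$. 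Summing the three pieces yields the theorem with $C_{1}=1+4/\epsilon^{2}$.

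The main technical point to verify is that each inner-boundary contribution in Stokes behaves as claimed. In the first application this follows from the decay $e^{2r}\to 0$ as one approaches $p$; in the second it rests on the asymptotic convergence of $v$ to a Reeb orbit of finite multiplicity at each $q_{j}\in v^{-1}(p)$, which comes from the local holomorphic expansion $v(\zeta)=\zeta^{k_{j}}(a_{0}+O(\zeta))$ around $q_{j}$; in the third it is automatic, since $\Phi$ vanishes identically near $r=-\infty$. A minor but necessary observation is that $\Phi(r)\lambda$ extends smoothly across $p$ precisely because $\Phi\equiv 0$ on a neighborhood of $r=-\infty$; for non-compact $S$ one applies the above to a compact exhaustion and passes to the limit.
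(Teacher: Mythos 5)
Your argument is correct, and it differs from the paper's in the one step that actually matters. Both proofs share the same skeleton for $E_{d\lambda}$ and $E_{\lambda}$: apply Stokes on the punctured region with the primitives $\lambda$ and $\Phi(r)\lambda$, and discard the contributions at the punctures, which are non-positive because $v$ converges there to Reeb orbits of positive period (and because $\Phi\equiv 0$ near $r=-\infty$). Everything then reduces to controlling the contact action $\int v^{*}\lambda$ over a level sphere near $\partial V$. The paper controls this term at an intermediate regular level $r=\varepsilon$ by introducing the cutoff $\tau(r)=\frac{\log\epsilon-r}{\log\epsilon-\varepsilon}$, pushing the integral into the compact collar $\{\varepsilon\leqq r\leqq\log\epsilon\}$, and invoking the taming inequality $v^{*}\beta\leqq C(\epsilon)v^{*}\omega$ there; this yields an unspecified constant but uses only compactness and non-degeneracy of $\omega$. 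You instead observe that $\omega_{0}=d\alpha$ with $\alpha=\tfrac{1}{2}e^{2r}\lambda$, so the level-sphere contact action equals $\tfrac{2}{\epsilon^{2}}$ times the symplectic area enclosed — an exact identity rather than a pointwise comparison — which eliminates the collar integrals entirely and produces the explicit constant $C_{1}=1+4/\epsilon^{2}$. (Your remark that the inner boundary terms of the first Stokes application vanish is correct but can be said even more simply: $\alpha$ is a smooth $1$-form on all of $V$ and $v$ is smooth across each $q_{j}$, so no disks need be removed at all.) The only point you should make explicit is the same one the paper handles by choosing $\varepsilon\in[\tfrac{1}{2}\log\epsilon,\tfrac{2}{3}\log\epsilon]$: the level $r=\log\epsilon$ need not be a regular value of $r\circ v$, so you should run the argument at a regular level $r_{0}$ chosen in a fixed compact subinterval (or take a limit of regular levels $r_{0}\nearrow\log\epsilon$); since $e^{-2r_{0}}$ is then uniformly bounded, the constant remains independent of $v$.
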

\begin{proof}
We restrict $v$ to the punctured Riemann surface $\Sigma:=S\backslash v^{-1}(p),$
so around each puncture $q\in v^{-1}(p),$ $v$ converges to a Reeb
orbit. Pick $\varepsilon\in\left[\frac{1}{2}\log\epsilon,\frac{2}{3}\log\epsilon\right]$
such that $\varepsilon$ is a regular value of $r\circ v,$ where
$r:W\to(-\infty,\log\epsilon)$ is the projection map defined by $(r,\Theta)\mapsto r.$
Denote $A:=v^{-1}((-\infty,\varepsilon]\times S^{2n-1})\subseteq\Sigma$
and $B_{+}:=v^{-1}(\{\varepsilon\}\times S^{2N-1})$. Let $\hat{A}$
be the orient blow up of $A$ around all $q$'s in $v^{-1}(p)$, i.e.
$\hat{A}=A\sqcup B_{-}$ with $B_{-}:=\underset{q\in v^{-1}(p)}{\bigsqcup}S_{q}^{1}$
being the disjoint union of circles parametrized by $v^{-1}(p).$
Hence we have $\partial\hat{A}=B_{+}-B_{-}.$ We continuously extend
$v$ to $\hat{A}$ by defining $v|_{S_{q}^{1}}$ to be the Reeb orbit
at negative infinity to which $v$ converges around $q.$ Now we have
\begin{eqnarray}
E_{d\lambda}(v) & = & \int_{\hat{A}}v^{*}d\lambda+\int_{\{\Sigma\backslash A\}\cap v^{-1}(W)}v^{*}d\lambda\nonumber \\
 & \leqq & \int_{B_{+}}v^{*}\lambda-\int_{B_{-}}v^{*}\lambda+\int_{\{\Sigma\backslash A\}\cap v^{-1}(W)}v^{*}d\lambda\nonumber \\
 & \leqq & \int_{B_{+}}v^{*}\lambda+\int_{\{\Sigma\backslash A\}\cap v^{-1}(W)}v^{*}d\lambda.\label{eq:d lambda}
\end{eqnarray}

Given $\phi\in\mathcal{C}=\left\{ \phi\in C_{c}^{\infty}((-\infty,\log\epsilon),[0,1])\left|\int_{-\infty}^{\log\epsilon}\phi(x)dx=1\right.\right\} ,$
we define $\Phi(r):=\int_{-\infty}^{r}\phi(x)dx,$ so we get 

\begin{eqnarray}
 &  & \int_{v^{-1}(W)}v^{*}(\phi(r)dr\wedge\lambda)\nonumber \\
 & = & \int_{\hat{A}}v^{*}(\phi(r)dr\wedge\lambda)+\int_{\{\Sigma\backslash A\}\cap v^{-1}(W)}v^{*}(\phi(r)dr\wedge\lambda)\nonumber \\
 & = & \int_{\hat{A}}v^{*}d(\Phi(r)\lambda)-\int_{\hat{A}}v^{*}(\Phi(r)d\lambda)+\int_{\{\Sigma\backslash A\}\cap v^{-1}(W)}v^{*}(\phi(r)dr\wedge\lambda)\nonumber \\
 & \leqq & \int_{\hat{A}}v^{*}d(\Phi(r)\lambda)+\int_{\{\Sigma\backslash A\}\cap v^{-1}(W)}v^{*}(dr\wedge\lambda)\nonumber \\
 & = & \int_{B_{+}}v^{*}(\Phi(r)\lambda)-\int_{B_{-}}v^{*}(\Phi(r)\lambda)+\int_{\{\Sigma\backslash A\}\cap v^{-1}(W)}v^{*}(dr\wedge\lambda)\nonumber \\
 & \leqq & \int_{B_{+}}v^{*}\lambda+\int_{\{\Sigma\backslash A\}\cap v^{-1}(W)}v^{*}(dr\wedge\lambda)\label{eq:lambda}
\end{eqnarray}

We choose a smooth function $\tau$ defined by $\tau(r)=\frac{\log\epsilon-r}{\log\epsilon-\varepsilon}$
for $\varepsilon\leqq r\leqq\log\epsilon.$ Since $\tau(\varepsilon)=1$
and $\tau(\log\epsilon)=0,$ by Stokes' Theorem we have
\begin{equation}
\int_{B_{+}}v^{*}\lambda=\int_{\{\Sigma\backslash A\}\cap v^{-1}(W)}v^{*}d\left(\tau(r)\lambda\right).\label{eq:boundary B+}
\end{equation}

Since the symplectic form $\omega$ is non-degenerate, on $\{\Sigma\backslash A\}\cap v^{-1}(W)$
we have $v^{*}d\lambda\leqq C(\epsilon)v^{*}\omega$ and $v^{*}d\left(\tau(r)\lambda\right)\leqq C(\epsilon)v^{*}\omega.$
Therefore from ($\ref{eq:d lambda}$), (\ref{eq:lambda}), and (\ref{eq:boundary B+})
we get 

\[
E(v)\leqq C_{1}(\epsilon)E_{symp}(v).
\]

\end{proof}
Proposition \ref{prop:number of 0 by Hofer} and Theorem \ref{thm: hofer's energy bd by symplectic area}
imply the following theorem
\begin{thm}
\label{thm:number bdd by symp area} Given $E>0$ there exists $N\in\mathbb{N},$
such that for any Riemann surface $(S,j)$ and any non-constant $J$-holomorphic
curve $v:\Sigma\to M$ with symplectic area $E_{symp}(v)\leqq E$,
we have $\left|u^{-1}(p)\right|\leqq N.$ 
\end{thm}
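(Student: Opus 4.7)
The plan is to chain the two immediately preceding results together; no new geometric input is needed. Given a non-constant $J$-holomorphic curve $v:S\to M$ with $E_{symp}(v)\leqq E$, I would first apply Theorem \ref{thm: hofer's energy bd by symplectic area} to convert the symplectic area bound into a Hofer's energy bound: since $E_{symp}(v)\leqq E<+\infty$, the hypothesis of that theorem is satisfied and we obtain
\[
E(v)\leqq C_{1}E_{symp}(v)\leqq C_{1}E,
\]
where $C_{1}=C_{1}(\epsilon)$ is the constant produced there, depending only on the fixed chart neighborhood of $p$ and not on $v$.

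Next I would feed this into Proposition \ref{prop:number of 0 by Hofer}, applied with energy threshold $E':=C_{1}E$ in place of $E$. That proposition then yields an integer $N=N(E')=N(C_{1}E)$, depending only on $E$ (through $C_{1}E$) and not on $v$, such that $|v^{-1}(p)|\leqq N$. Since $v$ is assumed non-constant, the hypothesis of Proposition \ref{prop:number of 0 by Hofer} is also satisfied, so the conclusion follows immediately.

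There is essentially no obstacle here: the work has already been done in establishing the two inputs. The only small bookkeeping points are to check that the hypotheses line up correctly (non-constancy and finiteness of $E_{symp}(v)$, both automatic from the assumptions) and to note that the constant $N$ produced at the end depends only on $E$ and on the ambient data $(M,\omega,J,p,\epsilon)$, which is exactly the uniformity claimed in the statement.
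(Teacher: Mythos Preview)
Your proposal is correct and matches the paper's approach exactly: the paper states that this theorem follows directly from Proposition~\ref{prop:number of 0 by Hofer} and Theorem~\ref{thm: hofer's energy bd by symplectic area}, and you have spelled out precisely that chaining. There is nothing to add.
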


Let $\mathcal{M}_{g}(M,J,p,n,E)$ be the space of equivalence classes
of $(v,\Sigma,j,\vec{z})$ such that $(\Sigma,j)$ is a smooth Riemann
surface of genus $g;$ $\vec{z}=(z_{1},z_{2},...,z_{n})$ are $n\in\mathbb{N}$
distinct points in $\Sigma;$ $v:\Sigma\to M$ is a $J$-holomorphic
curve satisfying $v^{-1}(p)=\{z_{1},z_{2},...,z_{n}\};$ $E_{symp}(v)\leqq E;$
if $v$ is a constant map, we require $v\neq p$ and $g\geqq2.$ $(v,\Sigma,j,\vec{z})$
and $(v',\Sigma',j',\vec{z}')$ are equivalent if there exists a diffeomorphism
$\sigma:\Sigma\to\Sigma'$ satisfying $\sigma^{*}j'=j,$ $\sigma(z_{i})=z_{i}'$
for $i=1,2,...,n,$ and $v=v'\circ\sigma.$ 

Let $\mathcal{M}_{g}^{SFT}(M\backslash p,J,n,E)$ be the space of
equivalent classes of $(v,\Sigma,j,\vec{z})$ such that $(\Sigma,j)$
is a smooth Riemann surface of genus $g;$ $\vec{z}=(z_{1},z_{2},...,z_{n})$
are $n$ distinct points (called punctures) in $\Sigma;$ $v:(\Sigma\backslash\{z_{1},z_{2},...,z_{n}\},j)\to(M\backslash p,J)$
is a $J$-holomorphic curve such that around each puncture $z_{i}$
for $i=1,2,...,n,$ $v$ converges to a Reeb orbit of $S^{2N-1};$
$E(v)\leqq E;$ if $v$ is constant, we require $g\geqq2.$ $(v,\Sigma,j,\vec{z})$
and $(v',\Sigma',j',\vec{z}')$ are equivalent if there exists a diffeomorphism
$\sigma:\Sigma\to\Sigma'$ satisfying $\sigma^{*}j'=j,$ $\sigma(z_{i})=z_{i}'$
for $i=1,2,...,n,$ and $v=v'\circ\sigma.$ 

Proposition \ref{prop:number of negative ends} and Proposition \ref{prop:number of 0 by Hofer}
imply 
\begin{cor}
\label{cor:vanish} Given $E_{1}>0,$ there exists $N_{1}\in\mathbb{N},$
such that for $n_{1}>N_{1},$ $\mathcal{M}_{g}(M,J,p,n_{1},E_{1})=\emptyset.$
Given $E_{2}>0,$ there exists $N_{2}\in\mathbb{N},$ such that for
$n_{2}>N_{2},$ $\mathcal{M}_{g}^{SFT}(M\backslash p,J,n_{2},E_{2})=\emptyset.$
\end{cor}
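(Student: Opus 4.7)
Both halves of the corollary are contrapositive statements: assume a curve in the moduli space exists and bound the number of marked points (resp.\ punctures) by a constant depending only on the energy. Each half reduces to one of the propositions already in hand, with the only mild issue being how to feed the right hypothesis into the right result.

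For the first assertion, given $(v,\Sigma,j,\vec z)\in\mathcal{M}_g(M,J,p,n_1,E_1)$ with $v$ non-constant, the symplectic area bound $E_{symp}(v)\leq E_1$ together with Theorem \ref{thm: hofer's energy bd by symplectic area} gives a Hofer energy bound $E(v)\leq C_1 E_1$. Proposition \ref{prop:number of 0 by Hofer} applied with energy $C_1 E_1$ then produces an $N$ with $|v^{-1}(p)|\leq N$. Since by definition of the moduli space $v^{-1}(p)=\{z_1,\dots,z_{n_1}\}$ with the $z_i$ distinct, this forces $n_1\leq N$. Setting $N_1:=N$, any $n_1>N_1$ is incompatible with $\mathcal{M}_g(M,J,p,n_1,E_1)\neq\emptyset$. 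The constant case is handled by the built-in convention $v\neq p$, which gives $v^{-1}(p)=\emptyset$ and hence $n_1=0\leq N_1$.

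For the second assertion, the hypothesis $E(v)\leq E_2$ is already a Hofer energy bound, so no conversion is required. By the very definition of $\mathcal{M}_g^{SFT}(M\setminus p,J,n_2,E_2)$, each of the $n_2$ marked points is a puncture around which $v$ converges to a Reeb orbit, i.e.\ a non-removable puncture. Proposition \ref{prop:number of negative ends} applied with $E=E_2$ yields a bound $N$ on the number of non-removable punctures, forcing $n_2\leq N$; set $N_2:=N$. If $v$ is constant, it cannot converge to a Reeb orbit at any puncture, so $n_2=0$ automatically.

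I do not expect a serious obstacle here: the two propositions have already done the analytical work (Gromov monotonicity for the puncture bound, the Stokes-type computation for the energy comparison), and the corollary is essentially a bookkeeping exercise. The only point that needs care is making sure the constant-map clauses in the two moduli-space definitions are consistent with the vacuous bound $n_i=0$, which they are.
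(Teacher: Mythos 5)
Your proof is correct and takes essentially the same route as the paper, which states the corollary without proof as an immediate consequence of Propositions \ref{prop:number of negative ends} and \ref{prop:number of 0 by Hofer}. Your bookkeeping — in particular the observation that Theorem \ref{thm: hofer's energy bd by symplectic area} is needed to convert the symplectic-area bound into a Hofer-energy bound for the first assertion (i.e.\ that one is really invoking Theorem \ref{thm:number bdd by symp area}), and the check that the constant-map conventions force $n_i=0$ — is exactly the intended argument.
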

There exists an obvious map $\pi$ from 
\[
\mathcal{M}_{g}^{SFT}(M\backslash p,J,n):=\underset{E\geqq0}{\bigcup}\mathcal{M}_{g}^{SFT}(M\backslash p,J,n,E)
\]
to 
\[
\mathcal{M}_{g}(M,J,p,n):=\underset{E\geqq0}{\bigcup}\mathcal{M}_{g}(M,J,p,n,E)
\]
by $\pi([v,\Sigma,j,\vec{z}])=[v,\Sigma,j,\vec{z}]$. $\pi$ is a
well defined bijective map by the Removable of Singularity Theorem
and the local behavior of holomorphic maps. 

We equip $\mathcal{M}_{g}(M,J,p,n,E)$ with the standard Gromov's
topology, and equip $\mathcal{M}_{g}^{SFT}(M\backslash p,J,n,E)$
with the Symplectic Field Theory topology introduced in \cite{compactness}.
From \cite{compactness} we know that $\pi$ is a homeomorphism.

To compactify $\mathcal{M}_{g}(M,J,p,n,E),$ we look at the moduli
space of curves with point-wise constraints, denoted by $\mathcal{M}_{g}^{*}(M,J,p,n,E),$
and defined by replacing the requirement ``$v^{-1}(p)=\{z_{1},z_{2},...,z_{n}\}$''
in the definition of $\mathcal{M}_{g}(M,J,p,n,E)$ by the requirement
``$v(z_{i})=p$ for $i=1,2,...,n$''. There exists an obvious inclusion
map 
\[
i:\mathcal{M}_{g}(M,J,p,n,E)\to\mathcal{M}_{g}^{*}(M,J,p,n,E).
\]
 We denote by $\overline{\mathcal{M}_{g}^{*}(M,J,p,n,E)}$ the standard
Gromov's compactification of $\mathcal{M}_{g}^{*}(M,J,p,n,E).$ We
define $\overline{\mathcal{M}_{g}(M,J,p,n,E)}$ to be the closure
of the subset $i\left(\mathcal{M}_{g}(M,J,p,n,E)\right)$ in $\overline{\mathcal{M}_{g}^{*}(M,J,p,n,E)},$
and hence $\overline{\mathcal{M}_{g}(M,J,p,n,E)}$ is compact. 

Denote by $\overline{\mathcal{M}_{g}^{SFT}(M\backslash p,J,n,E)}$
the Symplectic Field Theory compactification of $\mathcal{M}_{g}^{SFT}(M\backslash p,J,n)$
by adding holomorphic buildings (see \cite{compactness}). 
\begin{thm}
\cite{compactness} $\overline{\mathcal{M}_{g}^{SFT}(M\backslash p,J,n,E)}$
is compact.
\end{thm}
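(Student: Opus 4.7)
The statement is the SFT compactness theorem of \cite{compactness} specialized to the target $M\setminus p$ with its single cylindrical end $W=(-\infty,\log\epsilon)\times S^{2N-1}$, so my plan is to check that the hypotheses of \cite{compactness} hold in our setting and then invoke that theorem directly.

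I would first assemble the SFT data. The end $W$ is modeled on the symplectization of $(S^{2N-1},\lambda)$; since $J$ is integrable near $p$ and $\varphi^{*}i=J$, the pulled-back almost complex structure is cylindrical on $W$. The Reeb flow on $S^{2N-1}$ has only closed orbits of period $2k\pi$, so it satisfies the Morse--Bott condition required in \cite{compactness}. By definition every curve in $\mathcal{M}_g^{SFT}(M\setminus p,J,n,E)$ has Hofer energy at most $E$, so all the analytic inputs needed for \cite{compactness} are in place.

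Given a sequence $[v_m,\Sigma_m,j_m,\vec{z}_m]$ in $\mathcal{M}_g^{SFT}(M\setminus p,J,n,E)$, the SFT compactness theorem would then produce, after passing to a subsequence, a holomorphic building consisting of a bottom level mapping to $M\setminus p$ and finitely many sublevels mapping to the symplectization $\mathbb{R}\times S^{2N-1}$, matched along common asymptotic Reeb orbits. It then remains to check that this building lies in $\overline{\mathcal{M}_g^{SFT}(M\setminus p,J,n,E)}$: arithmetic genus $g$, $n$ top punctures in total, and total Hofer energy at most $E$; all three follow from the standard bookkeeping in \cite{compactness} (additivity of Hofer energy across levels, stability of genus under nodal degeneration, preservation of the count of top punctures).

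The one place where input specific to this paper is needed is in keeping the number of intermediate punctures uniformly bounded as $m\to\infty$, so that the neck-stretching procedure terminates after finitely many sublevels. This is exactly the content of Proposition~\ref{prop:number of negative ends}, applied on each level or to the appropriate truncations of the $v_m$ along regular level sets of $r\circ v_m$. I expect this to be the main (and essentially only) obstacle; once it is in place, the rest is a direct transcription of \cite{compactness} to the setting of a target with one negative cylindrical end, with no additional analytic difficulty.
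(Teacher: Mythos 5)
Your proposal is correct and matches the paper's treatment: the paper offers no independent argument here, simply citing the SFT compactness theorem of \cite{compactness}, and your verification of its hypotheses (cylindrical almost complex structure on the end $W$, Morse--Bott Reeb dynamics on $(S^{2N-1},\lambda)$ with periods in $2\pi\mathbb{Z}_{>0}$, and the a priori Hofer energy bound built into the definition of the moduli space) is exactly what is needed to invoke it. The only minor remark is that the bound on the number of intermediate punctures and levels in the limit building is already supplied internally by \cite{compactness} via energy quantization, so the appeal to Proposition~\ref{prop:number of negative ends} is harmless but not strictly necessary.
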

Denote $\overline{\mathcal{M}_{g}(M,J,p,n)}:=\underset{E\geqq0}{\bigcup}\overline{\mathcal{M}_{g}(M,J,p,n,E)}$
and $\overline{\mathcal{M}_{g}^{SFT}(M\backslash p,J,n)}:=\underset{E\geqq0}{\bigcup}\overline{\mathcal{M}_{g}^{SFT}(M\backslash p,J,n,E)}$.
The map $\pi$ extends continuously to a surjective map from $\overline{\mathcal{M}_{g}^{SFT}(M\backslash p,J,n)}$
to $\overline{\mathcal{M}_{g}(M,J,p,n)}.$ Actually, for $[v]\in\overline{\mathcal{M}_{g}^{SFT}(M\backslash p,J,n)},$
$\pi$ is just the forgetful map defined by forgetting the all the
negative levels, if any, of the holomorphic building $[v]$. 

Define $\overline{\mathcal{M}_{g}(M,J,p,n,E)}^{SFT}$ to be the closure
of the subset $\pi^{-1}\left(\mathcal{M}_{g}(M,J,p,n,E)\right)$ in
$\overline{\mathcal{M}_{g}^{SFT}(M\backslash p,J,n)}.$ Theorem \ref{thm: hofer's energy bd by symplectic area}
implies that there exists $E_{2}<\infty$ such that $\overline{\mathcal{M}_{g}(M,J,p,n,E)}^{SFT}\subseteq\overline{\mathcal{M}_{g}^{SFT}(M\backslash p,J,n,E_{2})},$
and hence 
\begin{cor}
$\overline{\mathcal{M}_{g}(M,J,p,n,E)}^{SFT}$ is compact.
\end{cor}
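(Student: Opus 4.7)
The plan is to realize $\overline{\mathcal{M}_{g}(M,J,p,n,E)}^{SFT}$ as a closed subset of the already-compact space $\overline{\mathcal{M}_{g}^{SFT}(M\backslash p,J,n,E_{2})}$ for a suitable $E_{2}<\infty$, and then invoke the elementary fact that a closed subset of a compact Hausdorff space is compact. All of the serious analytic content has already been packaged into Theorem \ref{thm: hofer's energy bd by symplectic area} and into the SFT compactness theorem cited above, so the corollary reduces to bookkeeping.

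First I would unravel definitions. By construction, $\overline{\mathcal{M}_{g}(M,J,p,n,E)}^{SFT}$ is the closure of $\pi^{-1}\!\left(\mathcal{M}_{g}(M,J,p,n,E)\right)$ taken inside the ambient space $\overline{\mathcal{M}_{g}^{SFT}(M\backslash p,J,n)}=\bigcup_{E'\geq 0}\overline{\mathcal{M}_{g}^{SFT}(M\backslash p,J,n,E')}$. An element of $\pi^{-1}\!\left(\mathcal{M}_{g}(M,J,p,n,E)\right)$ is represented by a smooth $J$-holomorphic curve $v:S\to M$ with $E_{symp}(v)\leq E$, viewed as the punctured curve $v|_{\Sigma}:\Sigma\to M\backslash p$ where $\Sigma = S\setminus v^{-1}(p)$.

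Next I would apply Theorem \ref{thm: hofer's energy bd by symplectic area} to $v|_{\Sigma}$ to get
\[
E(v|_{\Sigma})\leq C_{1}\,E_{symp}(v|_{\Sigma})\leq C_{1}E.
\]
Setting $E_{2}:=C_{1}E$, this shows
\[
\pi^{-1}\!\left(\mathcal{M}_{g}(M,J,p,n,E)\right)\subseteq \mathcal{M}_{g}^{SFT}(M\backslash p,J,n,E_{2}).
\]
Taking closures in the ambient space $\overline{\mathcal{M}_{g}^{SFT}(M\backslash p,J,n)}$ preserves inclusions, so
\[
\overline{\mathcal{M}_{g}(M,J,p,n,E)}^{SFT}\subseteq \overline{\mathcal{M}_{g}^{SFT}(M\backslash p,J,n,E_{2})}.
\]

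Finally, the preceding theorem (from \cite{compactness}) says the right-hand side is compact, and $\overline{\mathcal{M}_{g}(M,J,p,n,E)}^{SFT}$ is closed by its very definition as a closure. A closed subset of a compact Hausdorff space is compact, which finishes the proof. The only conceptual point to watch is that the closure is taken in the big union over all Hofer energies, rather than inside $\overline{\mathcal{M}_{g}^{SFT}(M\backslash p,J,n,E_{2})}$ to begin with, but the uniform energy bound $E_{2}=C_{1}E$ shows the two closures coincide, so there is no genuine obstacle.
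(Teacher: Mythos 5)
Your proposal is correct and follows essentially the same route as the paper: the text immediately preceding the corollary invokes Theorem \ref{thm: hofer's energy bd by symplectic area} to produce $E_{2}$ (in effect $E_{2}=C_{1}E$) with $\overline{\mathcal{M}_{g}(M,J,p,n,E)}^{SFT}\subseteq\overline{\mathcal{M}_{g}^{SFT}(M\backslash p,J,n,E_{2})}$, and then concludes compactness from the cited SFT compactness theorem exactly as you do. Your extra remark about the closure in the ambient union agreeing with the closure inside the energy-bounded stratum is a reasonable point of care that the paper leaves implicit.
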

Now $\pi$ restricts to a map from $\overline{\mathcal{M}_{g}(M,J,p,n,E)}^{SFT}$
to $\overline{\mathcal{M}_{g}(M,J,p,n,E)}.$ Since for any $[v]\in\mathcal{M}_{g}(M,J,p,n,E),$
$\pi^{-1}\left([v]\right)$ consists of exactly one point, $\overline{\mathcal{M}_{g}(M,J,p,n,E)}^{SFT}$
can be viewed as another compactification of $\mathcal{M}_{g}(M,J,p,n,E).$
If we take a further quotient which makes the $n$ punctures unordered,
we get two corresponding spaces and denote them by $\overline{\mathcal{M}_{g}(M,J,p,n,E)_{\sharp}}^{SFT}$
and $\overline{\mathcal{M}_{g}(M,J,p,n,E)_{\sharp}}.$ Let $\mathcal{M}_{g}(M,J,E)$
be the standard moduli space of $J$-holomorphic curves of genus $g$
in $M,$ with symplectic area no greater than $E,$ and let $\overline{\mathcal{M}_{g}(M,J,E)}$
be the Gromov's compactification of $\mathcal{M}_{g}(M,J,E).$ We
have an obvious homeomorphism $h$ from $\underset{n}{\bigcup}\overline{\mathcal{M}_{g}(M,J,p,n,E)_{\sharp}}$
to $\overline{\mathcal{M}_{g}(M,J,E)},$ and a continuos map $\pi'=h\circ\pi$
from $\overline{\mathcal{M}_{g}(M,J,E)}^{p}:=\underset{n}{\bigcup}\overline{\mathcal{M}_{g}(M,J,p,n,E)_{\sharp}}^{SFT}$
to $\overline{\mathcal{M}_{g}(M,J,E)}.$ The Corollary \ref{cor:vanish}
implies 
\begin{cor}
\textup{$\overline{\mathcal{M}_{g}(M,J,E)}^{p}$ is compact.}

\textup{This means that $\overline{\mathcal{M}_{g}(M,J,E)}^{p}$ serves
as a compactification of $\mathcal{M}_{g}(M,J,E).$ }\end{cor}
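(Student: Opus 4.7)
The plan is to reduce the apparently infinite union defining $\overline{\mathcal{M}_{g}(M,J,E)}^{p}$ to a finite one, whereupon the statement becomes an immediate consequence of the preceding corollary. The entire argument is a bookkeeping corollary of Theorem \ref{thm:number bdd by symp area}; the genuine analytic work has already been done.

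First, I would invoke Corollary \ref{cor:vanish} to extract an integer $N_{1}=N_{1}(E)$ with the property that $\mathcal{M}_{g}(M,J,p,n,E)=\emptyset$ whenever $n>N_{1}$. For such $n$, the preimage $\pi^{-1}(\mathcal{M}_{g}(M,J,p,n,E))$ is empty, so its closure $\overline{\mathcal{M}_{g}(M,J,p,n,E)}^{SFT}$ inside $\overline{\mathcal{M}_{g}^{SFT}(M\setminus p,J,n)}$ is empty, and the same conclusion survives the passage to the unordered quotient: $\overline{\mathcal{M}_{g}(M,J,p,n,E)_{\sharp}}^{SFT}=\emptyset$ for $n>N_{1}$.

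Second, the preceding corollary, combined with the obvious fact that compactness is preserved by the continuous quotient by the symmetric group $S_{n}$ permuting the $n$ marked points, shows that each stratum $\overline{\mathcal{M}_{g}(M,J,p,n,E)_{\sharp}}^{SFT}$ is compact. Combining the two observations yields
\[
\overline{\mathcal{M}_{g}(M,J,E)}^{p}\;=\;\bigcup_{n=0}^{N_{1}}\overline{\mathcal{M}_{g}(M,J,p,n,E)_{\sharp}}^{SFT},
\]
a finite union of compact spaces, hence compact. The supplementary claim that this space serves as a compactification of $\mathcal{M}_{g}(M,J,E)$ follows because, via the homeomorphism $h$ recorded before the statement, the open strata $\mathcal{M}_{g}(M,J,p,n,E)_{\sharp}$ sit inside $\overline{\mathcal{M}_{g}(M,J,E)}^{p}$ as an open dense subspace whose image under $\pi'$ is $\mathcal{M}_{g}(M,J,E)$.

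There is no real obstacle; the only point requiring mild care is checking that the topology on the finite union agrees stratum-by-stratum with the SFT topology inherited from each $\overline{\mathcal{M}_{g}^{SFT}(M\setminus p,J,n)}$, so that compactness of each piece transfers to compactness of the union. Since the strata for distinct values of $n$ are disjoint (because $n=|v^{-1}(p)|$ is a locally constant invariant of the smooth part of a holomorphic building after projection by $\pi$), this compatibility is automatic from the definition of each $\overline{\mathcal{M}_{g}(M,J,p,n,E)_{\sharp}}^{SFT}$ as a closure.
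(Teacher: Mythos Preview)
Your proposal is correct and follows essentially the same approach as the paper: the paper's entire argument is the single clause ``Corollary~\ref{cor:vanish} implies'', and you have simply spelled out the intended details---the union over $n$ is finite by Corollary~\ref{cor:vanish}, and each term is compact by the preceding corollary (surviving the $S_n$-quotient). Your remark on the disjointness of strata for different $n$ is a reasonable clarification of a point the paper leaves implicit.
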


\end{document}